\documentclass[11pt,a4paper]{article}
\usepackage[margin=2.5cm]{geometry}
\usepackage{amsmath,amssymb,amsthm}
\usepackage{parskip}
\usepackage{booktabs}
\usepackage{hyperref}
\hypersetup{colorlinks=true,linkcolor=black,citecolor=black,urlcolor=black}

\newtheorem{theorem}{Theorem}[section]
\newtheorem{lemma}[theorem]{Lemma}
\newtheorem{corollary}[theorem]{Corollary}
\newtheorem{proposition}[theorem]{Proposition}
\theoremstyle{definition}
\newtheorem{definition}[theorem]{Definition}
\theoremstyle{remark}
\newtheorem{remark}[theorem]{Remark}

\newcommand{\E}{\mathbb{E}}
\newcommand{\R}{\mathbb{R}}
\newcommand{\GL}{G_\Lambda}
\newcommand{\Hhat}{\hat{H}_\Lambda}

\title{Pointwise Hurst Estimation via Scale Accumulation:\\
A Noise-Robust Approach for Rough Volatility}
\author{J.\ Petkevi\v{c}ius}
\date{\today}

\begin{document}
\maketitle

\begin{abstract}
We introduce an estimator for the pointwise, time-varying H\"{o}lder
exponent (Hurst parameter) of a stochastic process, based on the
geometry accumulation integral
\[
  \GL(t) = \int_\Lambda^1 |\eth_s X(t)|\,s^{-1}\,ds,
  \qquad
  \Hhat(t) = 1 + \frac{\log \GL(t)}{\log\Lambda},
\]
where $\eth_s X(t)=(X(t+s)-X(t))/s$ is the scale derivative at
resolution $s$.

Three properties distinguish this from all existing estimators:
pointwise in time (gives a separate estimate at each $t$); defined at
finite resolution $\Lambda$ (no $n\to\infty$ limit needed); and
noise-robust by scale separation (microstructure noise lives at scales
$s\ll\Lambda^*$ and is eliminated by choosing $\Lambda>\Lambda^*$).

We prove: (i)~consistency $\Hhat(t)\to H(t)$ a.s.\ uniformly in $t$
as $\Lambda\to0$ (Theorem~\ref{thm:consist}); (ii)~noise robustness
with threshold $\Lambda^*=\sigma^{1/H}$ (Theorem~\ref{thm:noise});
(iii)~a CLT at rate $(\log\Lambda)^{-1/2}$ via mixing arrays
(Theorem~\ref{thm:CLT}).

Existing estimators \cite{hanschied2023,hanschied2025,gatheral2018,
fukasawa2019,bolko2022,contdas2024,rogers2023} estimate a global $H$ from integrated variance
observations. None give a pointwise time-varying estimate directly from
the price path.
\end{abstract}

\tableofcontents
\newpage

%% ============================================================
\section{Introduction}
%% ============================================================

\subsection{The problem}

Gatheral, Jaisson, and Rosenbaum \cite{gatheral2018} estimated the
Hurst exponent of log-volatility at $H\approx0.1$, far below $1/2$,
meaning rougher-than-Brownian dynamics. Cont and Das \cite{contdas2024}
and Rogers \cite{rogers2023} challenged this: the low $H$ estimate may
be a statistical artefact of microstructure noise contaminating
realized volatility at small scales.

The debate is a statistical problem: given price observations
contaminated by microstructure noise, estimate the true H\"{o}lder
exponent $H(t)$ of the underlying volatility process.

The difficulty is structural. The H\"{o}lder exponent is
\[
  H(t) = \liminf_{r\to0}\frac{\log|X(t+r)-X(t)|}{\log r},
\]
a limit as $r\to0$ — exactly where noise dominates. Classical
estimators either pre-average (losing pointwise structure) or use
integrated quantities like realized variance (estimating a global $H$,
not a time-varying one).

\subsection{What is known}

Han and Schied \cite{hanschied2023,hanschied2025} construct an
estimator from discrete observations of $\int_0^t g(X(s))\,ds$ for a
nonlinear $g$. They prove a.s.\ consistency with an explicit rate.
This is the state of the art, but: it estimates a global (constant)
$H$; requires integrated variance observations, not the price path
directly; and gives no noise-robust estimate at finite resolution.
Fukasawa et al.\ \cite{fukasawa2019} and Bolko et al.\ \cite{bolko2022}
develop pre-averaged estimators that are noise-robust but again give a
global $H$.

\subsection{New results}

The geometry accumulation estimator:
\[
  \Hhat(t) = 1 + \frac{\log\GL(t)}{\log\Lambda},
  \qquad
  \GL(t) = \int_\Lambda^1|\eth_s X(t)|\,s^{-1}\,ds.
\]

\begin{theorem}[Consistency]\label{thm:consist}
Let $X$ be a stochastic process with locally stationary increments and
a.s.\ pointwise H\"{o}lder exponent $H(t)\in(0,1)$ uniformly in $t$.
Then
\[
  \sup_{t\in[0,1]}|\Hhat(t)-H(t)|\to0\quad\text{a.s.\ as }\Lambda\to0.
\]
\end{theorem}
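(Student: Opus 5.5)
The plan is to turn the pointwise H\"{o}lder condition into two-sided bounds on the increments $|X(t+s)-X(t)|$ that hold uniformly in $t$. Then I bound $\GL(t)$ above and below by integrals of powers of $s$, and read off $\Hhat(t)$ from the logarithm. I will read the hypothesis of the theorem in a uniform form. For every $\varepsilon>0$ there is, almost surely, a random $\delta=\delta(\varepsilon,\omega)>0$ and a random constant $C=C(\varepsilon,\omega)\ge1$ such that
\[
  C^{-1}s^{H(t)+\varepsilon}\le |X(t+s)-X(t)| \le C\,s^{H(t)-\varepsilon}
  \qquad\text{for all }t\in[0,1],\ s\in(0,1].
\]
The upper bound is the uniform H\"{o}lder property. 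The lower bound is a uniform ``anti-H\"{o}lder'' or non-degeneracy property. I expect that the local stationarity of increments is what the paper intends to supply it; for fractional-type processes it should come from small-ball or oscillation estimates.

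Given these bounds, the computation is direct. Since $|\eth_sX(t)|=|X(t+s)-X(t)|/s$, the upper bound gives
\[
\GL(t)\le C\int_\Lambda^1 s^{H(t)-\varepsilon-2}\,ds=\frac{C}{1+\varepsilon-H(t)}\bigl(\Lambda^{H(t)-\varepsilon-1}-1\bigr)\le C'\Lambda^{H(t)-1-\varepsilon}.
\]
Here $C'$ is uniform in $t$ because $\sup_t H(t)<1$, and this is where I need $\varepsilon<1-\sup_tH$. The lower bound gives
\[
\GL(t)\ge C^{-1}\int_\Lambda^1 s^{H(t)+\varepsilon-2}\,ds\ge c\,\Lambda^{H(t)+\varepsilon-1}
\]
for $\Lambda$ small, using $H(t)+\varepsilon<1$ again. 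Taking logarithms and dividing by $\log\Lambda<0$, which reverses the inequalities, yields
\[
H(t)-\varepsilon-\frac{\log C'}{|\log\Lambda|}\le \Hhat(t)\le H(t)+\varepsilon+\frac{|\log c|}{|\log\Lambda|}.
\]
The constants do not depend on $t$. So $\limsup_{\Lambda\to0}\sup_t|\Hhat(t)-H(t)|\le\varepsilon$ almost surely. Letting $\varepsilon\downarrow0$ along a countable sequence, and intersecting the corresponding full-measure events, gives the theorem.

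The main obstacle is the uniform lower bound. A pointwise exponent defined by a $\liminf$ only yields the upper bound. The lower bound must hold for \emph{all} small $s$, not just along a subsequence, because the integral could otherwise be dominated by exceptional scales where the increment vanishes. In fact, a continuous path always has zeros of $X(t+s)-X(t)$ at some scales, so a literal pointwise lower bound at every $s$ is false. My first fix is to lower-bound the integral instead of the integrand. I would use the oscillation $\sup_{u\le s}|X(t+u)-X(t)|\ge s^{H(t)+\varepsilon}$ together with a Borel--Cantelli argument over dyadic scales $s=2^{-k}$ and a grid of $t$ of mesh $2^{-k/\varepsilon'}$. The goal is to show that on each dyadic block $[2^{-k-1},2^{-k}]$ a positive fraction of the $s$-measure has $|X(t+s)-X(t)|\ge 2^{-k(H(t)+\varepsilon)}$. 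Local stationarity, with increments comparable to a fractional Brownian motion with Gaussian small-ball bounds, makes the failure probability per block summable. Continuity of $H(\cdot)$ then passes from the grid to all $t$. This gives the block-wise lower bound, and hence the lower bound on $\GL(t)$ with only a harmless constant. If that fails, I would state the lower bound as an explicit additional hypothesis of the theorem.
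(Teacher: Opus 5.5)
Your argument is correct once the uniform two-sided bounds you isolate are granted. It takes a genuinely different route from the paper, and both arguments import hypotheses beyond the theorem's wording.

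The paper argues in mean. It writes $\Hhat(t)-H(t)=\bigl[\log\E[\GL(t)]/\log\Lambda-(H(t)-1)\bigr]+\log\tilde G_\Lambda(t)/\log\Lambda$ with $\tilde G_\Lambda=\GL/\E[\GL]$. It handles the first term by the two-sided mean condition (C3) (Lemma~\ref{lem:mean}). It handles the second by showing $\sup_t|\tilde G_{e^{-n}}(t)-1|\to0$ a.s.\ from the covariance condition (C1) via Chebyshev and Borel--Cantelli, with (C2) and Kolmogorov's criterion for uniformity (Proposition~\ref{prop:conc}). So your ``anti-H\"older'' lower bound is never needed pathwise there: it is the lower half of (C3), transferred to the path by concentration using only second moments.

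Your sandwich needs model-specific non-degeneracy input instead: small-ball or local-nondeterminism estimates, and continuity of $H(\cdot)$ for the grid step. In exchange it only asks for $\Lambda^{\varepsilon}\lesssim\tilde G_\Lambda\lesssim\Lambda^{-\varepsilon}$, which is exactly what $\log\tilde G_\Lambda/\log\Lambda\to0$ requires, and it covers all $\Lambda$ at once rather than only $\Lambda=e^{-n}$.

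That economy is a real advantage, because $\tilde G_\Lambda\to1$ fails for the motivating examples. $\GL$ is dominated by $s$ within a bounded factor of $\Lambda$. For $X$ a fractional Brownian motion $W$ of index $H$, self-similarity gives $\Lambda^{1-H}\GL(t)\overset{d}{=}\int_1^{1/\Lambda}|W(v)|v^{-2}\,dv\to\int_1^\infty|W(v)|v^{-2}\,dv$, a non-degenerate limit. So (C1) cannot hold there, while your bounds do.

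Two small fixes to your lower-bound sketch. First, take the oscillation over $u\in[s/2,s]$ rather than $u\le s$; otherwise the large increment may sit below $\Lambda$, outside the integral. Second, oscillation plus H\"older continuity in $s$ yields a large increment only on a set of measure $\gtrsim s^{1+O(\varepsilon)}$, not a positive fraction of the block. The resulting loss of a factor $\Lambda^{O(\varepsilon)}$ is absorbed when you let $\varepsilon\downarrow0$.
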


\begin{theorem}[Noise robustness]\label{thm:noise}
Let $\tilde X(t)=X(t)+\varepsilon(t)$ where $X$ has H\"{o}lder
exponent $H(t)$ and $\varepsilon$ is white noise with
$\E[|\varepsilon(t+s)-\varepsilon(t)|^2]=2\sigma^2$ for all $s>0$.
Set $\Lambda^*=\sigma^{1/H_{\min}}$ where $H_{\min}=\inf_t H(t)$.
For all $\Lambda>\Lambda^*$:
\[
  \sup_{t\in[0,1]}|\hat H_\Lambda^{\tilde X}(t)-H(t)|\to0
  \quad\text{a.s.}
\]
\end{theorem}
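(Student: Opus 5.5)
Since $\Lambda>\Lambda^*$ is held fixed, the convergence in Theorem~\ref{thm:noise} has to be read in the only sense the statement allows: the estimator computed from $\tilde X$ satisfies the bound of Theorem~\ref{thm:consist}, up to an error that vanishes as $\Lambda\to0$ along the range $\Lambda>\Lambda^*$ (equivalently, as $\sigma\to0$ with $\Lambda\downarrow\Lambda^*$). I would therefore prove a quantitative comparison: for $\Lambda>\Lambda^*$,
\[
\sup_t\bigl|\log G_\Lambda^{\tilde X}(t)-\log G_\Lambda^{X}(t)\bigr| = o(|\log\Lambda|),
\]
and then invoke Theorem~\ref{thm:consist} for $X$. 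Dividing by $\log\Lambda$ transfers consistency from $\hat H_\Lambda^X$ to $\hat H_\Lambda^{\tilde X}$.

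\textbf{Step 1: decompose the integrand.} Write $\eth_s\tilde X(t)=\eth_sX(t)+(\varepsilon(t+s)-\varepsilon(t))/s$. By the triangle inequality,
\[
\bigl|G_\Lambda^{\tilde X}(t)-G_\Lambda^X(t)\bigr|\le N_\Lambda(t):=\int_\Lambda^1|\varepsilon(t+s)-\varepsilon(t)|\,s^{-2}\,ds.
\]

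\textbf{Step 2: lower bound on the signal.} From the H\"older exponent, at scale $s$ we have $|\eth_sX(t)|\approx s^{H(t)-1}$. Hence $G^X_\Lambda(t)\gtrsim \Lambda^{H(t)-1}$, up to factors $\Lambda^{\pm\delta}$ for every $\delta>0$, uniformly in $t$. This is the same estimate used in the proof of Theorem~\ref{thm:consist}, and I would reuse it, with the $\liminf$ made uniform via the locally stationary increments assumption.

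\textbf{Step 3: upper bound on the noise.} We have $\E N_\Lambda(t)\le\sqrt2\,\sigma\int_\Lambda^1 s^{-2}\,ds\le\sqrt2\,\sigma\Lambda^{-1}$. The ratio of noise to signal is then about $\sigma\Lambda^{-H(t)}\le\sigma\Lambda^{-H_{\min}}$, which is $<1$ exactly when $\Lambda>\sigma^{1/H_{\min}}=\Lambda^*$. This is where the threshold comes from. On this event, $\log G^{\tilde X}=\log G^X+\log(1+\theta)$ with $|\theta|\le c<1$, so the difference is $O(1)=o(|\log\Lambda|)$. For $\Lambda$ near the threshold, the ratio is at most $\Lambda^{-\delta}$ times a constant. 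That still gives an additive term of order $\delta|\log\Lambda|$, and letting $\delta\to0$ handles it.

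\textbf{Main obstacle: upgrading to an almost sure, uniform-in-$t$ bound.} The expectation bound on $N_\Lambda$ is not enough; I need $\sup_t N_\Lambda(t)$. White noise is not pointwise defined, so a literal supremum may be infinite. I would first try to interpret $\varepsilon$ as having bounded sample paths of size $O(\sigma\sqrt{\log})$ (for example, Gaussian noise sampled on a grid of mesh $\ge\Lambda$, as in the finite-resolution setting). A union bound over $O(\Lambda^{-1})$ grid points, together with Gaussian tails, then gives
\[
\sup_t N_\Lambda(t)\le C\sigma\Lambda^{-1}\sqrt{\log(1/\Lambda)}.
\]
The extra $\sqrt{\log}$ factor is absorbed into the $\Lambda^{-\delta}$ slack. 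Borel--Cantelli along $\Lambda_k=2^{-k}$, with monotonicity of $G_\Lambda$ in $\Lambda$ between dyadic points, would give the almost sure statement. If this interpretation fails, I would state the noise hypothesis as a sample-path bound $|\varepsilon|\le C\sigma$ and run the same argument deterministically.
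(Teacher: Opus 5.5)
Your route is the paper's. Both bound the noise part by $\sigma\Lambda^{-1}$ and the signal by $\Lambda^{H-1}$, compare through the ratio $\sigma\Lambda^{-H}$, and transfer Theorem~\ref{thm:consist} by dividing by $\log\Lambda$. Your Step~1 triangle inequality is in fact the correct form of the paper's identity $G_\Lambda^{\tilde X}=G_\Lambda^X+G_\Lambda^\varepsilon$.

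But Step~3 rests on a reversed inequality. For $\Lambda<1$ the map $H\mapsto\Lambda^{-H}$ is increasing, so $\sigma\Lambda^{-H(t)}\ge\sigma\Lambda^{-H_{\min}}$, not $\le$. Hence $\Lambda>\sigma^{1/H_{\min}}$ controls the noise-to-signal ratio only where $H(t)=H_{\min}$. Control uniform in $t$ needs $\Lambda^{H(t)}>\sigma$ for every $t$, i.e.\ $\Lambda>\sigma^{1/H_{\max}}$, which for $\sigma<1$ is the \emph{larger} threshold. This cannot be patched: with the paper's own scalings the stated conclusion fails whenever $H$ is non-constant. Take $\Lambda=(2\sigma)^{1/H_{\min}}>\Lambda^*$, let $\sigma\to0$, and fix $t_0$ with $H(t_0)>H_{\min}$. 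The ratio at $t_0$ is of order $\sigma\Lambda^{-H(t_0)}=\tfrac12\Lambda^{H_{\min}-H(t_0)}\to\infty$. The reverse triangle inequality gives $|G_\Lambda^{\tilde X}(t_0)-N_\Lambda(t_0)|\le G_\Lambda^X(t_0)$. So with $N_\Lambda\asymp\sigma\Lambda^{-1}$ (as the paper assumes), $\hat H_\Lambda^{\tilde X}(t_0)=\log\sigma/\log\Lambda+o(1)\to H_{\min}\neq H(t_0)$. The paper's Lemma~\ref{lem:noise} works with a single constant $H$ (it sets $\Lambda^*=\sigma^{1/H}$), so it never confronts this. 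What your argument can actually deliver is the theorem with $\Lambda^*=\sigma^{1/H_{\max}}$, or with $H$ constant.

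Even with the corrected threshold, your near-threshold step controls only one side. From $|G_\Lambda^{\tilde X}-G_\Lambda^X|\le N_\Lambda$ you do get $\log G_\Lambda^{\tilde X}\le\log G_\Lambda^X+\log(1+N_\Lambda/G_\Lambda^X)$, an error of order $\delta|\log\Lambda|$. But once $N_\Lambda\ge G_\Lambda^X$, the lower bound $G_\Lambda^X-N_\Lambda$ is vacuous and nothing bounds $\log G_\Lambda^{\tilde X}$ from below. The unspecified constants in $\asymp$ and the $\Lambda^{\pm\delta}$ factors from Step~2 allow exactly this for $\Lambda$ just above $\Lambda^*$. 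So the claim ``$<1$ exactly when $\Lambda>\Lambda^*$'' is not justified either.

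A clean repair uses monotonicity in $\Lambda$. For fixed $\eta>0$, $G_\Lambda^{\tilde X}\ge G_{\Lambda^{1-\eta}}^{\tilde X}\ge G_{\Lambda^{1-\eta}}^X-N_{\Lambda^{1-\eta}}$. At scale $\Lambda^{1-\eta}$ the ratio is, up to constants and your $\sqrt{\log}$ factor, at most $\sigma\Lambda^{-(1-\eta)(H(t)+\delta)}\le\Lambda^{\eta H(t)-\delta}\to0$, whenever $\sigma<\Lambda^{H(t)}$ and $\delta<\eta H_{\min}$. This gives $\log G_\Lambda^{\tilde X}\ge(1-\eta)(H(t)-1+\delta)\log\Lambda-O(1)$, which costs only $O(\eta+\delta)$ in $\hat H_\Lambda^{\tilde X}$.

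Finally, replacing the white noise by grid-sampled Gaussian or pathwise bounded noise changes the hypothesis rather than proving the statement as written. That change is forced on you, since the paper simply treats $G_\Lambda^\varepsilon\asymp\sigma\Lambda^{-1}$ as an almost sure relation without justification.
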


\begin{theorem}[CLT and rate]\label{thm:CLT}
Under the conditions of Theorem~\ref{thm:consist}, at each fixed $t$:
\[
  \frac{\Hhat(t)-H(t)}{(\log\Lambda)^{-1/2}}
  \xrightarrow{d} \mathcal{N}(0,\sigma_H^2)
  \quad\text{as }\Lambda\to0,
\]
where $\sigma_H^2$ is given explicitly in the proof.
\end{theorem}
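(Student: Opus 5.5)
Throughout, write $L=-\log\Lambda$ and substitute $s=e^{-u}$, so that
\[
  \GL(t)=\int_0^L e^{u}\,\bigl|X(t+e^{-u})-X(t)\bigr|\,du .
\]
The plan is to recast $\log\GL(t)$ as a partial sum of $L$ weakly dependent terms indexed by the logarithmic scale $u$, and then apply a central limit theorem for strongly mixing triangular arrays.

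\textbf{Step 1 (Lamperti reduction).} Fix $t$ and use the local stationarity of increments in the hypotheses of Theorem~\ref{thm:consist}. Near $t$, $X(t+r)-X(t)$ behaves like $c(t)\,r^{H(t)}$ times a self-similar tangent process. Hence
\[
  |X(t+e^{-u})-X(t)| = e^{-uH}Z_u\,(1+\rho_u),
\]
with $H=H(t)$. Here $Z_u=|B^H(e^{-u})|e^{uH}$ is the Lamperti transform of the tangent fBm. This $Z_u$ is strictly stationary in $u$ and, as for the fractional Ornstein--Uhlenbeck process, is strongly mixing with exponentially decaying $\alpha$-coefficients. The remainder $\rho_u\to0$ a.s.\ as $u\to\infty$, with a rate controlled exactly as in the proof of Theorem~\ref{thm:consist}.

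\textbf{Step 2 (block decomposition).} Cut $[0,L]$ into unit blocks and set
\[
  G_k=\int_0^k e^{u(1-H)}Z_u\,du,
  \qquad
  \xi_k=\log(G_k/G_{k-1})-(1-H).
\]
Then
\[
  L\,(\Hhat(t)-H(t)) = -\Bigl(\log \GL - L(1-H)\Bigr) = -\sum_{k\le L}\xi_k + R_L ,
\]
where $R_L$ collects the $\rho_u$ and boundary contributions. I will show $R_L=o_P(L^{1/2})$ using the Step 1 rate. Each $\xi_k$ is a functional of $Z$ on $(-\infty,k]$ that is exponentially insensitive to the far past, because of the geometric weight $e^{-(1-H)(k-u)}$. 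Hence $\{\xi_k\}$ is a stationary, approximately $\alpha$-mixing array with exponential mixing rate, and it has finite $2+\delta$ moments. These moments follow from Gaussian tail bounds on $\log Z_u$ near $0$ and $\infty$.

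\textbf{Step 3 (CLT).} Apply the Ibragimov / Peligrad CLT for mixing arrays to $L^{-1/2}\sum_{k\le L}(\xi_k-\E\xi_k)$. Dividing by $L^{1/2}$ gives the stated normalization by $(\log\Lambda)^{-1/2}$. The centering is $\E\xi_k=0$ asymptotically, by stationarity and telescoping of means. The limit variance is the long-run variance
\[
  \sigma_H^2=\operatorname{Var}\xi_1+2\sum_{j\ge1}\operatorname{Cov}(\xi_1,\xi_{1+j}),
\]
which is expressible through the covariance of the Lamperti-transformed fBm and is the explicit $\sigma_H^2$ of the statement.

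\textbf{Main obstacle.} The delicate point is the non-degeneracy and correct order of $\sigma_H^2$. Because $\sum\xi_k$ telescopes to $\log G_L-\log G_0-L(1-H)$, the exponential weighting concentrates $\GL$ on the last $O(1)$ of scales. Stationarity could then force the long-run variance to vanish, in which case the fluctuation is only $O_P(1)$. I would first compute $\sigma_H^2$ directly from the stationary covariance of $\log\int_{-\infty}^0 e^{(1-H)v}Z_{v}\,dv$ and check positivity. If it degenerates, the rate in the statement must come from the remainder $R_L$, that is, from the slowly varying local-stationarity correction to $H$ near $t$. In that case I would redo Step 2 with $\xi_k$ redefined to include that drift.
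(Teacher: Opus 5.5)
\textbf{The gap is in Step~3.} It is the only place a non-degenerate Gaussian could come from, and with your $\xi_k$ it cannot produce one. Put $Y_k:=\log G_k-k(1-H)=\log\int_0^k e^{-(1-H)v}Z_{k-v}\,dv$. Then $\xi_k=Y_k-Y_{k-1}$ identically.

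By stationarity of the Lamperti process on $u\in\R$, $Y_k$ has the law of $\log\int_0^k e^{-(1-H)v}Z_{-v}\,dv$. This increases a.s.\ to $\log W$, where $W:=\int_0^\infty e^{-(1-H)v}Z_{-v}\,dv\in(0,\infty)$.

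So $\{\xi_k\}$ is a coboundary, and $\sum_{2\le k\le L}\xi_k=Y_L-Y_1=O_P(1)$. Its long-run variance is exactly $0$, and the mixing CLT can only return $\mathcal{N}(0,0)$. The degeneracy you flag as a possibility is forced, not contingent.

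The fallback does not repair this. You already claimed $R_L=o_P(L^{1/2})$, which makes $R_L$ invisible at the scale $L^{1/2}$. In any case, a slowly varying local-stationarity correction would give a bias, and would need hypotheses absent from the theorem; it would not give a Gaussian fluctuation.

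Carried to the end, your own reduction shows $|\log\Lambda|\,(\Hhat(t)-H(t))\xrightarrow{d}-\log W$ for an fBm tangent. So the error is of order $|\log\Lambda|^{-1}$, with a non-degenerate and generally non-Gaussian law, and $|\log\Lambda|^{1/2}(\Hhat(t)-H(t))\to0$ in probability. A limit $\mathcal{N}(0,\sigma_H^2)$ with $\sigma_H^2>0$ cannot be obtained along these lines.

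Two smaller points. First, $G_0=0$, so the telescoping must start at $k=2$. Second, an fBm tangent with an a.s.\ multiplicative remainder $\rho_u\to0$ is an extra assumption beyond ``locally stationary increments''.

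\textbf{The paper argues differently and does not resolve your obstacle.} Its route is:
\begin{enumerate}
\item linearize $\log\tilde G_\Lambda\approx\tilde G_\Lambda-1$ via Proposition~\ref{prop:conc};
\item write $\tilde G_\Lambda-1$ as a $w_\Lambda$-weighted integral of $\xi_s-1$;
\item assert $\mathrm{Var}\,\tilde G_\Lambda=O(1/|\log\Lambda|)$ from (C1);
\item apply a mixing-array CLT.
\end{enumerate}
But $w_\Lambda(s)\asymp s^{H-2}\Lambda^{1-H}$ gives mass tending to $1-C^{H-1}$ to $[\Lambda,C\Lambda]$. This is exactly the concentration you noticed, so there is no averaging over $|\log\Lambda|$ nearly independent scales. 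For fBm, $\tilde G_\Lambda\xrightarrow{d}W/\E W$ and the summands in (C1) tend to a positive constant, so (C1) fails there.

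Moreover, the paper's final ``combining'' step is off by a factor $|\log\Lambda|$. Even granting $|\log\Lambda|^{1/2}(\tilde G_\Lambda-1)\Rightarrow\mathcal{N}(0,\sigma_H^2)$, the term $(\log\Lambda)^{-1}\log\tilde G_\Lambda$ multiplied by $|\log\Lambda|^{1/2}$ tends to $0$.

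So the missing step is not in the paper either. The obstacle you identified is a defect of the statement itself. The statement would have to be recast at rate $|\log\Lambda|^{-1}$ with limit law $-\log W$, or placed under genuinely different hypotheses.
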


\subsection{Comparison with Han--Schied}

\begin{center}
\begin{tabular}{lll}
\toprule
Property & Han--Schied \cite{hanschied2023} & This paper \\
\midrule
Estimate type & Global $H$ & Pointwise $H(t)$ \\
Observations & Integrated variance & Price path increments \\
Resolution & Asymptotic $n\to\infty$ & Finite $\Lambda$ \\
Noise robust & No (pre-averaging) & Yes (scale separation) \\
Rate & a.s., explicit in $n$ & $(\log\Lambda)^{-1/2}$ CLT \\
\bottomrule
\end{tabular}
\end{center}

%% ============================================================
\section{The geometry accumulation estimator}
%% ============================================================

\subsection{Setup}

Let $X:[0,1]\times\Omega\to\R$ be a stochastic process, observed at
discrete times $t_i=i/n$, $i=0,\dots,n$.

\begin{definition}[Scale derivative and accumulator]
For $s\in(0,1)$ and $t\in[0,1-s]$:
\[
  \eth_s X(t) := \frac{X(t+s)-X(t)}{s}.
\]
The geometry accumulation integral at resolution $\Lambda\in(0,1)$:
\[
  \GL(t) := \int_\Lambda^1|\eth_s X(t)|\,s^{-1}\,ds.
\]
The estimator:
\[
  \Hhat(t) := 1 + \frac{\log\GL(t)}{\log\Lambda}.
\]
\end{definition}

\begin{remark}
$\GL(t)$ integrates path information across a whole range of scales
$[\Lambda,1]$, not just a single scale. The H\"{o}lder exponent
controls how $\GL$ scales:
$\GL(t)\asymp\Lambda^{H(t)-1}$, so $\log\GL(t)/\log\Lambda\to H(t)-1$.
This is the core mechanism: the exponent is read off the slope of
$\log\GL$ against $\log\Lambda$.
\end{remark}

\subsection{HGDS framework}

The estimator comes from the History-Generated Dynamical Systems (HGDS)
framework, where every process admits a canonical scale decomposition at
each $\Lambda>0$:
\[
  X(t+\Lambda)-X(t) = \eth_\Lambda X(t)\cdot\Lambda + R_\Lambda X(t).
\]
$\eth_\Lambda X$ is the macro-scale derivative and $R_\Lambda X$ is
the fine-scale residual. $\GL$ measures total macro-scale variation
across $[\Lambda,1]$, encoding path roughness. For $H(t)\in(0,1)$:
\[
  \E[|\eth_s X(t)|]\asymp s^{H(t)-1},\quad
  \GL(t)\asymp\int_\Lambda^1 s^{H(t)-2}\,ds
  = \frac{1-\Lambda^{H(t)-1}}{1-H(t)}\asymp\Lambda^{H(t)-1}.
\]

%% ============================================================
\section{Consistency}
%% ============================================================

Two conditions:
\begin{itemize}
\item[(C1)] \emph{Scale decorrelation:}
\[
  \sum_{n=1}^\infty
  \frac{\int_{e^{-n}}^1\!\int_{e^{-n}}^1
    |\mathrm{Cov}(\eth_s X(t),\eth_u X(t))|\,(su)^{-1}\,ds\,du}
  {\bigl(\int_{e^{-n}}^1\E[|\eth_s X(t)|]s^{-1}\,ds\bigr)^2}
  < \infty
\]
uniformly in $t$.
\item[(C2)] \emph{H\"{o}lder regularity:}
  $\E[|\eth_s X(t)-\eth_s X(t')|^p]\le C|t-t'|^{pH}s^{p(H-1)}$
  for some $p>1/H$.
\item[(C3)] \emph{Mean asymptotics:}
  $\E[|\eth_s X(t)|]\asymp s^{H(t)-1}$ as $s\to0$, uniformly in $t$.
\end{itemize}

\begin{lemma}[Mean accumulator asymptotics]\label{lem:mean}
Under (C3):
\[
  \frac{\log\E[\GL(t)]}{\log\Lambda}\to H(t)-1
  \quad\text{uniformly in }t\text{ as }\Lambda\to0.
\]
\end{lemma}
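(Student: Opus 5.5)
By Tonelli's theorem, $\E[\GL(t)]=\int_\Lambda^1 \E[|\eth_s X(t)|]\,s^{-1}\,ds$, since the integrand is nonnegative and jointly measurable. The plan is to sandwich this integral between two explicit power integrals using (C3), and then take logarithms.

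We read (C3) in the following uniform form. There exist constants $0<c\le C<\infty$ and a scale $s_0\in(0,1)$, independent of $t$, such that
\[
  c\,s^{H(t)-1}\le \E[|\eth_s X(t)|]\le C\,s^{H(t)-1}
  \qquad\text{for } s\in(0,s_0],\ t\in[0,1].
\]
On the remaining range $[s_0,1]$ we also need $m_1\le\E[|\eth_s X(t)|]\le M_1$ uniformly in $t$. This is part of what "uniformly in $t$" should mean, and it holds, for example, when $s\mapsto\E|X(t+s)-X(t)|$ is continuous and positive. We split $\E[\GL(t)]=I_1+I_2$ according to $s\in[\Lambda,s_0]$ and $s\in[s_0,1]$. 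Then $I_2$ lies between $m_1\log(1/s_0)$ and $M_1\log(1/s_0)$, so it is bounded uniformly in $t$ and $\Lambda$. For the first piece,
\[
  c\int_\Lambda^{s_0}s^{H(t)-2}\,ds\;\le\; I_1\;\le\; C\int_\Lambda^{s_0}s^{H(t)-2}\,ds,
  \qquad
  \int_\Lambda^{s_0}s^{H(t)-2}\,ds=\frac{\Lambda^{H(t)-1}-s_0^{H(t)-1}}{1-H(t)}.
\]

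Uniformity in $t$ is the main obstacle. We use that $H(t)$ lies in a compact subinterval $[H_{\min},H_{\max}]\subset(0,1)$; this is the assumption "$H(t)\in(0,1)$ uniformly in $t$" from Theorem~\ref{thm:consist}. Then $1-H(t)\in[1-H_{\max},1-H_{\min}]$ is bounded away from $0$ and $1$. For $\Lambda\le\Lambda_0$ small enough, uniformly in $t$, we have $s_0^{H(t)-1}\le\tfrac12\Lambda^{H(t)-1}$; it suffices that $(\Lambda/s_0)^{1-H_{\max}}\le 1/2$. Moreover $\Lambda^{H(t)-1}\ge\Lambda^{H_{\max}-1}\to\infty$, so $I_2$ is absorbed as well. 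Together these give constants $0<a\le A$, independent of $t$, such that
\[
  a\,\Lambda^{H(t)-1}\le \E[\GL(t)]\le A\,\Lambda^{H(t)-1}
  \qquad\text{for all }\Lambda\le\Lambda_0,\ t\in[0,1].
\]

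Taking logarithms and dividing by $\log\Lambda<0$ yields
\[
  \sup_{t}\Bigl|\frac{\log\E[\GL(t)]}{\log\Lambda}-(H(t)-1)\Bigr|
  \le\frac{\max(|\log a|,|\log A|)}{|\log\Lambda|}\to0 .
\]
This proves the lemma, with an explicit rate $O(1/|\log\Lambda|)$ that will be useful later when bias is compared to the fluctuation scale in Theorem~\ref{thm:CLT}. If (C3) only gives ratio bounds $c(s)$, $C(s)$ that are slowly varying rather than constant, the same argument applies after replacing $a$ and $A$ by quantities that are $\Lambda^{o(1)}$ uniformly in $t$. In that case the rate is lost but uniform convergence is kept.
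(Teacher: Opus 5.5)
Your proof is correct and follows the same route as the paper. Both split $\E[\GL(t)]$ at a fixed $s_0$, treat $\int_{s_0}^1$ as $O(1)$, sandwich $\int_\Lambda^{s_0}$ between constant multiples of $\int_\Lambda^{s_0}s^{H(t)-2}\,ds$ via (C3), and take logarithms; your version additionally spells out what the paper leaves implicit (the uniform bound on $[s_0,1]$, the need for $\sup_t H(t)<1$ to make the constants $t$-independent, and the correct sign $\Lambda^{H(t)-1}-s_0^{H(t)-1}$ in the antiderivative), and it yields the explicit rate $O(1/|\log\Lambda|)$.
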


\begin{proof}
Split $\E[\GL(t)]=\int_\Lambda^{s_0}+\int_{s_0}^1$. The second
integral is $O(1)$. For the first, using (C3):
\[
  \int_\Lambda^{s_0}\E[|\eth_s X(t)|]s^{-1}\,ds
  \asymp\int_\Lambda^{s_0}s^{H(t)-2}\,ds
  =\frac{s_0^{H(t)-1}-\Lambda^{H(t)-1}}{1-H(t)}\asymp\Lambda^{H(t)-1}.
\]
Taking logs: $\log\E[\GL(t)]\sim(H(t)-1)\log\Lambda$.
Uniformity follows from the uniform bound in (C3). \qed
\end{proof}

\begin{proposition}[Uniform a.s.\ concentration]\label{prop:conc}
Under (C1) and (C2), writing $\tilde G_\Lambda=\GL/\E[\GL]$:
\[
  \sup_t|\tilde G_{e^{-n}}(t)-1|\to0\quad\text{a.s.\ as }n\to\infty.
\]
\end{proposition}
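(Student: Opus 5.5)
The plan has two parts: first a pointwise almost-sure statement along the geometric sequence $\Lambda_n=e^{-n}$, then an upgrade to a supremum over $t$ by a chaining argument.

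\emph{Step 1 (pointwise concentration).} Fix $t$ and write $G_n:=G_{e^{-n}}(t)$. By Fubini,
\[
\mathrm{Var}(G_n)\le\int_{e^{-n}}^1\!\int_{e^{-n}}^1\bigl|\mathrm{Cov}(|\eth_sX(t)|,|\eth_uX(t)|)\bigr|\,(su)^{-1}\,ds\,du .
\]
\begin{itemize}
\item For jointly Gaussian (or Gaussian-subordinated) increments, the covariance of absolute values is bounded by a constant multiple of $|\mathrm{Cov}(\eth_sX,\eth_uX)|$. In general I would simply read (C1) as controlling the covariance of the absolute values. This is an interpretive step that should be stated explicitly.
\item Chebyshev then gives $P(|\tilde G_{e^{-n}}(t)-1|>\epsilon)\le\epsilon^{-2}\,\mathrm{Var}(G_n)/(\E G_n)^2$.
\item This is summable in $n$ by (C1), and uniformly so in $t$.
\item Borel--Cantelli yields $\tilde G_{e^{-n}}(t)\to1$ a.s. for each fixed $t$, and in fact for all $t$ in any countable set simultaneously.
\end{itemize}

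\emph{Step 2 (uniformity in $t$).} I use a dyadic grid $D_m=\{k2^{-m}\}$ together with a modulus-of-continuity bound for $t\mapsto\tilde G_{e^{-n}}(t)$.
\begin{itemize}
\item By Minkowski's integral inequality and (C2),
\[
\|G_n(t)-G_n(t')\|_p\le\int_{e^{-n}}^1\|\eth_sX(t)-\eth_sX(t')\|_p\,s^{-1}\,ds\le C|t-t'|^{H}\,e^{n(1-H)},
\]
where $H$ is the exponent in (C2).
\item Lemma~\ref{lem:mean} gives $\E G_n\ge c\,e^{n(1-H(t))}$ up to subexponential factors. Hence the normalized increment has $L^p$ norm at most $C|t-t'|^H e^{n(H(t)-H)+o(n)}$.
\item Since $p>1/H$, the Kolmogorov--Chentsov/Garsia--Rodemich--Rumsey lemma bounds the Hölder-$\gamma$ seminorm of $\tilde G_{e^{-n}}$ on $[0,1]$, for some $\gamma\in(0,H-1/p)$, in $L^p$.
\item Choosing a mesh $\delta_n=e^{-\kappa n}$ with $\kappa$ large, the oscillation within mesh cells is at most $\delta_n^{\gamma}\cdot$(seminorm). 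Markov's inequality then makes its exceedance probability $\le C e^{-pn(\kappa\gamma-\text{const})}$, which is summable.
\item On the grid of $\delta_n^{-1}=e^{\kappa n}$ points, a union bound needs the per-point tail to beat $e^{\kappa n}$. Plain Chebyshev with (C1) gives only summability, not exponential decay. I would therefore strengthen Step 1 to a $p$-th moment bound: use a Rosenthal/Marcinkiewicz-type inequality for the scale-integral, treating $\log s$ as a "time" variable in which (C1) supplies decorrelation. The aim is a bound $\E|\tilde G_n-1|^p\lesssim n^{-p/2}$. Then take the mesh only polynomially fine, $\delta_n=n^{-\beta}$, and compensate with the Hölder bound. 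That bound requires $e^{n(H(t)-H)}$ to be at most polynomial in $n$, which holds when $H$ in (C2) is taken as $\inf_t H(t)$ locally and $H(\cdot)$ is continuous.
\end{itemize}

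\emph{Step 3.} Combine the two bounds:
\[
\sup_t|\tilde G-1|\le\max_{D}|\tilde G-1|+\mathrm{osc}.
\]
Both terms are eventually $<\epsilon$ a.s. by Borel--Cantelli; intersecting over rational $\epsilon$ gives the claim.

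\emph{Main obstacle.} The hard part is Step 2's balancing act. The union bound over the grid needs the pointwise tails to decay faster than the grid grows, while (C1) as stated only gives summable second moments. Normalizing by $\E G_n$, which varies in $t$ through $H(t)$, also introduces exponential factors $e^{n(H(t)-H)}$ that must be tamed by localizing in $t$. I expect the proof to need either a higher-moment version of (C1) or a local constancy assumption on $H(\cdot)$ at scale $1/n$, and I would add whichever is required explicitly.
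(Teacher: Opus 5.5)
Your route is the paper's: Chebyshev and Borel--Cantelli along $\Lambda=e^{-n}$ at fixed $t$, then Kolmogorov's criterion in $t$ and a grid-plus-continuity argument for the supremum. Your caveat in Step~1 is warranted. The paper asserts that $\mathrm{Var}(\tilde G_\Lambda(t))$ \emph{equals} the (C1) summand, but the variance involves $\mathrm{Cov}(|\eth_sX(t)|,|\eth_uX(t)|)$. (C1) controls this only through a comparison such as the centred Gaussian bound $\mathrm{Cov}(|A|,|B|)\le\frac12|\mathrm{Cov}(A,B)|$.

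The obstacle you isolate in Step~2 is a genuine gap, and it is exactly where the paper's proof is only an assertion. Write $S_n$ for the H\"{o}lder-$\gamma$ seminorm of $t\mapsto\tilde G_{e^{-n}}(t)$. Kolmogorov's criterion gives at best $\sup_n\E S_n^p<\infty$, and with (C2) as stated only when $H(\cdot)$ is constant, because of the factors $e^{n(H(t)-H)}$ you found; the paper never addresses these. It does not give the a.s.\ bound $\sup_nS_n<\infty$ that ``H\"{o}lder continuous uniformly in $n$'' would need. What it yields is tightness in $C[0,1]$, hence $\sup_t|\tilde G_{e^{-n}}(t)-1|\to0$ in probability only. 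The abstract upgrade the paper appeals to is in fact false. Take independent $\xi_n\in\{0,1\}$ with $P(\xi_n=1)=1/n$, independent uniform $U_n\in[0,1]$, $h_n=n^{-1/(pH-1)}$, a Lipschitz bump $\phi$ supported in $[-1,1]$ with $0\le\phi\le\phi(0)=1$, and set $Y_n(t)=\xi_n\phi((t-U_n)/h_n)$. Then $\sup_t\E Y_n(t)^2\le2h_n/n$ is summable and $\E|Y_n(t)-Y_n(t')|^p\le C|t-t'|^{pH}$ uniformly in $n$, yet $\sup_tY_n(t)=\xi_n=1$ infinitely often a.s. So, as written, neither your proposal nor the paper's proof derives the proposition from (C1)--(C2). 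Both also tacitly need (C3) to control $\E G_{e^{-n}}(t)$.

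On your repair, the two strengthenings are both needed, not alternatives. The polynomial mesh $\delta_n=n^{-\beta}$ needs a pointwise rate $\sup_t\E|\tilde G_{e^{-n}}(t)-1|^p\lesssim n^{-p/2}$, with $p$ large enough that some $\beta$ satisfies $1/(\gamma p)<\beta<p/2-1$. It \emph{also} needs a localized (C2), with $H(\cdot)$ regular enough that $n|H(t)-H(t')|=O(\log n)$ whenever $|t-t'|\le\delta_n$; continuity of $H$ alone does not give this. An exponentially fine mesh removes the second requirement but then needs exponential tails.

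More seriously, the rate cannot come from the mechanism you sketch, a Rosenthal-type inequality with $\log s$ as time. The weight $w_\Lambda(s)\propto s^{H-2}$ gives $[\Lambda,K\Lambda]$ a mass tending to $1-K^{H-1}$. So $\tilde G_\Lambda(t)$ is an average over $O(1)$ octaves just above $\Lambda$, not over $\asymp n$ decorrelated blocks. For fBm $B_H$, stationary increments and self-similarity give $\tilde G_\Lambda(t)\to J/\E J$ in law, with $J=\int_1^\infty|B_H(r)|r^{-2}\,dr$ non-degenerate, so even (C1) fails there. A bound of the kind you need expresses concentration of the increments at the few scales adjacent to $\Lambda$. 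It has to be imposed as an additional hypothesis rather than derived from (C1).
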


\begin{proof}
\textbf{Pointwise convergence.}
$\mathrm{Var}(\tilde G_\Lambda(t))$ equals the summand in (C1) at
$\Lambda=e^{-n}$. By (C1): $\sum_n\mathrm{Var}(\tilde G_{e^{-n}}(t))<\infty$.
Chebyshev and Borel--Cantelli give $\tilde G_{e^{-n}}(t)\to1$ a.s.\
for each $t$.

\textbf{Uniform convergence.}
By (C2) and Kolmogorov's continuity criterion applied to
$t\mapsto\tilde G_{e^{-n}}(t)$: the process has a.s.\ H\"{o}lder
continuous sample paths in $t$, uniformly in $n$. A standard
maximal inequality (applied to the spatial supremum over a dense grid
then extended by continuity) upgrades pointwise a.s.\ convergence to
uniform a.s.\ convergence. \qed
\end{proof}

\begin{proof}[Proof of Theorem~\ref{thm:consist}]
Write:
\[
  \Hhat(t)-H(t)
  = \underbrace{\frac{\log\E[\GL(t)]}{\log\Lambda}-(H(t)-1)}_{\to0\text{ by Lemma~\ref{lem:mean}}}
  +\underbrace{\frac{\log\tilde G_\Lambda(t)}{\log\Lambda}}_{\to0\text{ by Prop.~\ref{prop:conc}}}.
\]
Second term: $|\log\tilde G_\Lambda|/|\log\Lambda|\le
2|\tilde G_\Lambda-1|/|\log\Lambda|\to0$ a.s.\ since
$\sup_t|\tilde G_\Lambda-1|\to0$ and $|\log\Lambda|\to\infty$.
Both terms go to zero uniformly in $t$. \qed
\end{proof}

%% ============================================================
\section{Noise robustness}
%% ============================================================

\subsection{The noise model}

Observed price: $\tilde X(t)=X(t)+\varepsilon(t)$, where $\varepsilon$
is white noise: $\E[|\varepsilon(t+s)-\varepsilon(t)|^2]=2\sigma^2$
for all $s>0$.

Scale derivative of the observed process:
\[
  \eth_s\tilde X(t) = \eth_s X(t)+\eth_s\varepsilon(t),\quad
  \E[|\eth_s\varepsilon(t)|]
  = \frac{\E[|\varepsilon(t+s)-\varepsilon(t)|]}{s}
  \asymp\frac{\sigma}{s}.
\]
Noise scales as $s^{-1}$. Signal scales as $s^{H-1}$ with $H<1$.
Noise dominates at small scales.

\subsection{Scale separation}

The crossover scale $\Lambda^*$ satisfies $\sigma/\Lambda^*=(\Lambda^*)^{H-1}$,
i.e.\ $(\Lambda^*)^H=\sigma$, giving $\Lambda^*=\sigma^{1/H}$.

\begin{lemma}[Noise contamination]\label{lem:noise}
For $\Lambda>\Lambda^*=\sigma^{1/H}$:
\[
  G_\Lambda^{\tilde X}(t) = G_\Lambda^X(t)\cdot(1+o(1))
  \quad\text{a.s.\ as }\sigma\to0.
\]
\end{lemma}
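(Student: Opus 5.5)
The plan is to treat the noise as an additive perturbation of the accumulator and control it with the triangle inequality. Since $\eth_s\tilde X(t)=\eth_s X(t)+\eth_s\varepsilon(t)$, we have $\bigl||\eth_s\tilde X(t)|-|\eth_s X(t)|\bigr|\le|\eth_s\varepsilon(t)|$. Integrating against $s^{-1}\,ds$ over $[\Lambda,1]$ gives
\[
  \bigl|G_\Lambda^{\tilde X}(t)-G_\Lambda^X(t)\bigr|\le N_\Lambda(t):=\int_\Lambda^1|\eth_s\varepsilon(t)|\,s^{-1}\,ds .
\]
It therefore suffices to show that $N_\Lambda(t)/G_\Lambda^X(t)\to0$ a.s. as $\sigma\to0$. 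I read the lemma with $\Lambda$ fixed and $\sigma\to0$. In that regime $\Lambda^*=\sigma^{1/H}\to0$, so the hypothesis $\Lambda>\Lambda^*$ holds eventually and is not an extra constraint.

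\textbf{Step 1 (noise term).} Write $\varepsilon=\sigma\xi$, where $\xi$ is a fixed normalised white noise with $\E[|\xi(t+s)-\xi(t)|^2]=2$. Then $N_\Lambda(t)=\sigma N^\xi_\Lambda(t)$, where $N^\xi_\Lambda(t)=\int_\Lambda^1|\xi(t+s)-\xi(t)|s^{-2}\,ds$ does not depend on $\sigma$. By Cauchy--Schwarz and Fubini,
\[
  \E[N^\xi_\Lambda(t)]\le\sqrt2\int_\Lambda^1 s^{-2}\,ds\le\sqrt2/\Lambda<\infty .
\]
Hence $N^\xi_\Lambda(t)<\infty$ a.s., and in expectation $N_\Lambda(t)\lesssim\sigma/\Lambda$. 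This matches the heuristic $\E|\eth_s\varepsilon|\asymp\sigma/s$ given before the lemma.

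\textbf{Step 2 (signal term).} I need an a.s. positive lower bound on $G_\Lambda^X(t)$. Lemma~\ref{lem:mean} gives $\E[G_\Lambda^X(t)]\asymp\Lambda^{H(t)-1}$, and Proposition~\ref{prop:conc} gives $\tilde G_\Lambda\to1$ uniformly. Together these give $G_\Lambda^X(t)\ge c\,\Lambda^{H(t)-1}$ for small $\Lambda$, a.s. For a fixed $\Lambda$ it is enough that $G_\Lambda^X(t)>0$ a.s. This holds because $X$ has continuous non-constant paths with $H(t)<1$, so $|\eth_sX(t)|$ is not a.e. zero on $[\Lambda,1]$.

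\textbf{Step 3 (combine).} Combining the two steps,
\[
  \frac{N_\Lambda(t)}{G^X_\Lambda(t)}\le\sigma\,\frac{N^\xi_\Lambda(t)}{G^X_\Lambda(t)}\to0
\]
a.s. as $\sigma\to0$, because the ratio on the right is an a.s. finite random variable independent of $\sigma$. Quantitatively, the relative error is of order $\sigma\Lambda^{-1}\Lambda^{1-H}=\sigma\Lambda^{-H}=(\Lambda^*/\Lambda)^{H}$. This shows that $\Lambda>\Lambda^*$ is exactly the regime where the error is below one, and that it is $o(1)$ once $\Lambda/\Lambda^*\to\infty$. With $H$ replaced by $H_{\min}$, the same bound holds uniformly in $t$.

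\textbf{Main obstacle.} The delicate point is the almost-sure, and uniform-in-$t$, finiteness of $N^\xi_\Lambda$. Literal white noise has no measurable path version, so one must either fix a jointly measurable version of $\varepsilon$ with $\sup_t\E|\varepsilon(t)|<\infty$, or work with the discretely observed noise on the grid $t_i$. If that fails, my fallback is to take $\sigma_k\to0$ along a sequence with $\sum_k\sigma_k\Lambda^{-H}<\infty$. I would then apply Markov's inequality and Borel--Cantelli to $\sup_t N_\Lambda(t)/G^X_\Lambda(t)$, bounding the supremum by $2\Lambda^{-1}\sup_t|\varepsilon(t)|$. This gives the a.s. statement along that sequence.
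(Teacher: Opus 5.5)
Your argument is correct for fixed $\Lambda$, given a well-posed noise model (see below). It rests on the same comparison as the paper, a noise accumulator of order $\sigma/\Lambda$ against the signal accumulator, but it is executed differently and repairs the paper's proof in two places.

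First, the paper writes $G_\Lambda^{\tilde X}=G_\Lambda^X+G_\Lambda^\varepsilon$. This is not true in general, since $|a+b|\ne|a|+|b|$. Your two-sided bound $|G_\Lambda^{\tilde X}-G_\Lambda^X|\le N_\Lambda$ is the correct replacement, and it is all the conclusion needs.

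Second, the paper reads off an almost-sure statement from $\asymp$ relations that really concern means ($\E|\eth_s\varepsilon(t)|\asymp\sigma/s$, $\E[G_\Lambda^X(t)]\asymp\Lambda^{H-1}$). Your coupling $\varepsilon=\sigma\xi$ makes the a.s.\ claim genuine. The relative error becomes $\sigma$ times one $\sigma$-independent, a.s.\ finite random variable. You then need only $\E[N_\Lambda^\xi(t)]<\infty$ and $G_\Lambda^X(t)>0$, and no rates at all.

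The price, as you note yourself, is that $\Lambda^*$ becomes vacuous and survives only in the heuristic rate $(\Lambda^*/\Lambda)^H$. The paper's order-of-magnitude comparison is what motivates $\Lambda^*$. It is also what would have to be made rigorous in the joint regime $\Lambda,\sigma\to0$, with a.s.\ rather than mean bounds and with $\Lambda/\Lambda^*\to\infty$, as you rightly insist. The proof of Theorem~\ref{thm:noise} implicitly uses that joint regime when it combines this lemma with Theorem~\ref{thm:consist}. There the ratio $N_\Lambda^\xi/G_\Lambda^X$ depends on $\Lambda$, and your scaling trick alone no longer suffices.

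On your ``main obstacle'': the first remedy is a change of model, not a choice of version, and Step~1 must be adjusted accordingly. No jointly measurable process has the pure-nugget variogram $\E|\varepsilon(t+s)-\varepsilon(t)|^2=2\sigma^2$ for all $s>0$ with $\sigma>0$. To see this, take bounded $f$ with $\int_0^1 f=0$. The variogram gives $\E[(\int_0^1 f(u)(\varepsilon(u)-\varepsilon(0))\,du)^2]=\sigma^2(\int_0^1 f)^2=0$. So a.s.\ the path is a.e.\ constant, which contradicts $\E\int_0^1\!\int_0^1|\varepsilon(u)-\varepsilon(v)|^2\,du\,dv=2\sigma^2$. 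With a measurable noise you should therefore drop the variogram and bound $\E|\xi(t+s)-\xi(t)|\le2\sup_u\E|\xi(u)|$, instead of using Cauchy--Schwarz.

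Likewise, $\sup_t|\varepsilon(t)|=\infty$ a.s.\ for independent, identically distributed values with unbounded (e.g.\ Gaussian) marginals on a continuum. So your fallback bound only works on the finite grid. The same holds for your uniform-in-$t$ remark, which needs $\sup_t N_\Lambda^\xi(t)<\infty$. On the grid, Borel--Cantelli is superfluous, since $\sup_t N_\Lambda(t)\le2\sigma\Lambda^{-1}\max_i|\xi(t_i)|\to0$ surely under your coupling. The paper's proof carries the same ill-posedness without comment.
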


\begin{proof}
Noise contribution to $\GL$:
\[
  G_\Lambda^\varepsilon = \int_\Lambda^1|\eth_s\varepsilon|\,s^{-1}\,ds
  \asymp\sigma\int_\Lambda^1 s^{-2}\,ds = \sigma(\Lambda^{-1}-1)
  \asymp\sigma\Lambda^{-1}.
\]
Signal contribution: $G_\Lambda^X\asymp\Lambda^{H-1}$.

Ratio: $G_\Lambda^\varepsilon/G_\Lambda^X\asymp\sigma\Lambda^{-1}/\Lambda^{H-1}
=\sigma\Lambda^{-H}=(\sigma^{1/H}/\Lambda)^H$.

For $\Lambda>\Lambda^*=\sigma^{1/H}$: this ratio $<1$ and $\to0$ as
$\sigma\to0$. So $G_\Lambda^{\tilde X}=G_\Lambda^X+G_\Lambda^\varepsilon
=G_\Lambda^X(1+o(1))$ a.s. \qed
\end{proof}

\begin{proof}[Proof of Theorem~\ref{thm:noise}]
For $\Lambda>\Lambda^*$, Lemma~\ref{lem:noise} gives
$\log G_\Lambda^{\tilde X}=\log G_\Lambda^X+\log(1+o(1))=
\log G_\Lambda^X+o(1)$.
So $\hat H_\Lambda^{\tilde X}(t)=\hat H_\Lambda^X(t)+o(1)/\log\Lambda$.
By Theorem~\ref{thm:consist}: $\hat H_\Lambda^X(t)\to H(t)$ a.s.
The $o(1)/\log\Lambda$ term vanishes since $|\log\Lambda|\to\infty$. \qed
\end{proof}

\begin{remark}[Practical threshold]\label{rem:threshold}
For rough volatility with $H\approx0.1$:
$\Lambda^*=\sigma^{1/0.1}=\sigma^{10}$.
For $\sigma=0.01$ (1\% microstructure noise): $\Lambda^*=10^{-20}$.
Any sampling frequency above this is noise-free. This suggests why
classical estimators may be contaminated at typical frequencies: they
probe scales below $\Lambda^*$, while the HGDS estimator at
$\Lambda>\Lambda^*$ is uncontaminated.
\end{remark}

%% ============================================================
\section{Central limit theorem}
%% ============================================================

\begin{proof}[Proof of Theorem~\ref{thm:CLT}]
Write $\Hhat(t)-H(t)=(\log\Lambda)^{-1}\log\tilde G_\Lambda(t)+\delta_\Lambda(t)$,
where $\delta_\Lambda=(\log\Lambda)^{-1}(\log\E[\GL]-(H-1)\log\Lambda)\to0$
by Lemma~\ref{lem:mean}.

For the main term: $\log\tilde G_\Lambda\approx\tilde G_\Lambda-1$
(since $\tilde G_\Lambda\to1$ a.s.\ by Proposition~\ref{prop:conc}).
Write:
\[
  \tilde G_\Lambda(t)-1
  = \int_\Lambda^1(\xi_s(t)-1)\,w_\Lambda(s)\,ds,\quad
  \xi_s=\frac{|\eth_s X|}{\E[|\eth_s X|]},
\]
where $w_\Lambda(s)=\E[|\eth_s X|]s^{-1}/\E[\GL]$ is a probability
weight on $[\Lambda,1]$.

This is a weighted integral of mean-zero terms. We apply the CLT
for mixing arrays \cite{ibragimov1975}: conditions are (a)~the
variance is $O(1/|\log\Lambda|)$ (from (C1) with equality on the
diagonal), and (b)~the Lindeberg condition holds since $\xi_s$ has
bounded moments (from (C2) with $p>1/H$). The mixing condition
follows directly from (C1). Hence:
\[
  |\log\Lambda|^{1/2}(\tilde G_\Lambda(t)-1)
  \xrightarrow{d}\mathcal{N}(0,\sigma_H^2),
\]
where
\[
  \sigma_H^2 = \lim_{\Lambda\to0}\frac{1}{|\log\Lambda|}
  \int_\Lambda^1\!\int_\Lambda^1
  \mathrm{Cov}(\xi_s,\xi_u)\,w_\Lambda(s)w_\Lambda(u)\,
  s^{-1}u^{-1}\,ds\,du.
\]
Combining: $(\log\Lambda)^{1/2}(\Hhat(t)-H(t))\xrightarrow{d}
\mathcal{N}(0,\sigma_H^2)$. \qed
\end{proof}

%% ============================================================
\section{Application to rough volatility}
%% ============================================================

\subsection{The Gatheral--Jaisson--Rosenbaum model}

Spot volatility $\sigma(t)=\exp(V(t))$ where $V$ is fractional
Brownian motion with $H=0.1$. Log-price:
$\log S(t)=\int_0^t\sigma(s)\,dW(s)+\text{drift}$.
The H\"{o}lder exponent of $\sigma(t)$ is $H=0.1$ a.s.\ (monofractal).
Observed: $\tilde S(t_i)=S(t_i)\cdot e^{\varepsilon_i}$ where
$\varepsilon_i\sim\mathcal{N}(0,\sigma_{\mathrm{noise}}^2)$ i.i.d.

\begin{corollary}[Consistent estimation under microstructure]\label{cor:rVol}
In the rough volatility model above, with
$\Lambda>\Lambda^*=\sigma_{\mathrm{noise}}^{10}$:
\[
  \Hhat(t)\to 0.1\quad\text{a.s.\ for all }t\in[0,1].
\]
\end{corollary}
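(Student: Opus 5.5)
The plan is to obtain Corollary~\ref{cor:rVol} by combining Theorem~\ref{thm:noise} with Theorem~\ref{thm:consist}, applied to the process whose H\"{o}lder exponent is claimed to be $0.1$. Read literally, $\Hhat(t)$ is computed from the observed log-price $\log\tilde S=\log S+\varepsilon$. The exponent $0.1$ is that of the volatility $\sigma(t)=e^{V(t)}$, not of $\log S$. Pathwise, $\log S$ has H\"{o}lder exponent $1/2$ wherever $\sigma(t)>0$, because it is a continuous local martingale with a.s.\ positive quadratic variation density. So the first step is to fix the target: the estimator must be applied to a volatility proxy $X=\sigma$ (or to $V=\log\sigma$), observed with additive noise.

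\textbf{Step 1 (the clean process satisfies (C1)--(C3)).} Since $e^{x}$ is smooth and $\sigma$ is bounded away from $0$ and $\infty$ on $[0,1]$ a.s., the exponent of $\sigma$ equals that of $V$. It therefore suffices to check (C1)--(C3) for fBm with $H=0.1$ and transfer them by a local Lipschitz argument.
\begin{itemize}
\item[(C3)] This is exact: $\E|\eth_sV(t)|=c\,s^{H-1}$ with $c=\sqrt{2/\pi}$.
\item[(C2)] This follows from Gaussian moments of fBm increments, with $p>10$.
\end{itemize}

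\textbf{Step 2 (verify (C1); the main obstacle).} Both $\eth_sV(t)$ and $\eth_uV(t)$ are taken at the same base point $t$. Their correlation depends only on $u/s$ and decays like $(u/s)^{H}$ for $u\ll s$, using the fBm covariance $\tfrac12(s^{2H}+u^{2H}-|s-u|^{2H})$. The plan is to pass to logarithmic scale, $s=e^{-a}$, so that the double integral becomes a convolution in $a$ against an integrable kernel. I would try this first.
\begin{itemize}
\item The computation must be done with the weights $s^{H-1}\cdot s^{-1}$, not just the bare $(su)^{-1}$.
\item With those weights the numerator is dominated by the smallest scales, $a\approx n$.
\item The honest outcome may be that the summand in (C1) is $O(1)$ rather than $O(1/n)$, in which case (C1) fails.
\end{itemize}
If (C1) fails, Proposition~\ref{prop:conc} cannot be invoked. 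I would then fall back on a direct pathwise argument: the law of the iterated logarithm for fBm increments gives $|V(t+s)-V(t)|\le s^{H}\,L(s)$ with $L$ of logarithmic size, uniformly in $t$ by the L\'evy modulus. Inserting this into $\GL$ gives $\log\GL=(H-1)\log\Lambda+O(\log\log(1/\Lambda))$. That bound yields the upper inequality $\limsup\Hhat\le H$. A matching lower bound would need a uniform lower bound on increments across many scales.

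\textbf{Step 3 (noise).} Apply Lemma~\ref{lem:noise} and Theorem~\ref{thm:noise} with $H_{\min}=0.1$, which gives the threshold $\Lambda^*=\sigma_{\mathrm{noise}}^{10}$. The multiplicative noise $e^{\varepsilon_i}$ becomes additive after taking logs. With the i.i.d.\ Gaussian model, $\E|\varepsilon(t+s)-\varepsilon(t)|=2\sigma_{\mathrm{noise}}/\sqrt{\pi}$, which matches the hypothesis of Theorem~\ref{thm:noise}.

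\textbf{Caveat on the noise statement.} Theorem~\ref{thm:noise} is an asymptotic statement in which $\Lambda\to0$ while staying above $\Lambda^*$. That makes sense only jointly with $\sigma_{\mathrm{noise}}\to0$. I would state the corollary in that joint regime, with $\Lambda\to0$ and $\sigma_{\mathrm{noise}}^{10}/\Lambda\to0$, rather than for fixed noise.

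\textbf{Step 4 (conclusion and uniformity).} Combining Steps 1--3 gives $\Hhat(t)\to0.1$ uniformly in $t$. The remaining delicate points are:
\begin{itemize}
\item justifying (C1), or its pathwise replacement from Step 2;
\item the identification in Step 1 of which observed series carries the exponent $0.1$. Volatility is not observed directly, so a realized-volatility proxy would introduce its own estimation error, which would need a separate bound.
\end{itemize}
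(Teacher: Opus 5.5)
\textbf{Verdict: genuine gap.} Your proposal does not prove Corollary~\ref{cor:rVol}, and no proof of it as posed is possible; you also mislabel one inequality and leave one direction of the argument open.

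\textbf{Your objections to the paper's route are correct.} The paper's proof is one step: invoke Theorem~\ref{thm:noise}, asserting that the log-return process satisfies (C1)--(C2) with $H=0.1$ ``by standard properties of fBm''. You should state your objections as facts, since together they show the corollary is false as written.
\begin{itemize}
\item On the clean log-price the increments are of size $\sigma(t)\,|W(t+s)-W(t)|$, so $\Hhat(t)$ targets $1/2$, not $0.1$.
\item For fixed $\sigma_{\mathrm{noise}}>0$, the condition $\Lambda>\Lambda^*$ is incompatible with $\Lambda\to0$. Once $\Lambda\ll\sigma_{\mathrm{noise}}^2$, the noise part ($\asymp\sigma_{\mathrm{noise}}\Lambda^{-1}$ by the paper's own scaling) dominates and drives $\Hhat$ to $0$. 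So the estimator on the observed price never targets $0.1$.
\item (C1) genuinely fails for fBm, which settles your hedge. By stationary increments and self-similarity, $\Lambda^{1-H}\GL(t)$ is equal in law to $\int_1^{1/\Lambda}|B_H(a)|a^{-2}\,da$. As $\Lambda\downarrow0$ this increases to the a.s.\ finite, nondegenerate $\int_1^\infty|B_H(a)|a^{-2}\,da$. Hence $\GL/\E[\GL]\not\to1$, and the same scaling makes the summand of (C1) tend to a positive constant.
\item Contrary to your Step~1, (C2) fails too. For $t'=t+a$ with $a\le s$ and $H\le 1/2$, concavity of $x^{2H}$ gives $\mathrm{Var}(\eth_sV(t)-\eth_sV(t'))\ge 2a^{2H}s^{-2}$. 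So $\E|\eth_sV(t)-\eth_sV(t')|^p\ge c_p\,a^{pH}s^{-p}$, which exceeds the bound $Ca^{pH}s^{p(H-1)}$ in (C2) by a factor $s^{-pH}\to\infty$.
\end{itemize}
Hence Proposition~\ref{prop:conc}, and with it Theorems~\ref{thm:consist} and~\ref{thm:noise}, cannot be used here. Only a pathwise route is viable.

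\textbf{The inequality in your fallback is reversed, and one direction is missing.} The L\'evy-modulus bound $|V(t+s)-V(t)|\le Cs^H(\log(1/s))^{1/2}$ gives $\GL(t)\le C'\Lambda^{H-1}(\log(1/\Lambda))^{1/2}$. Because $\log\Lambda<0$, this yields $\liminf\Hhat(t)\ge H$, not $\limsup\Hhat(t)\le H$. The missing half is a lower bound on $\GL$, and at fixed $t$ it is elementary.
\begin{itemize}
\item For small $\Lambda$, $\GL(t)\ge(4\Lambda^2)^{-1}\bigl|\int_\Lambda^{2\Lambda}(V(t+s)-V(t))\,ds\bigr|$, which is equal in law to $\tfrac14\Lambda^{H-1}|Z|$ with $Z\sim\mathcal{N}(0,v)$ and fixed $v>0$.
\item Thus $P(G_{e^{-n}}(t)<e^{n(1-H)}n^{-2})=O(n^{-2})$.
\item Borel--Cantelli plus monotonicity of $\Lambda\mapsto\GL(t)$ then give $\limsup\Hhat(t)\le H$ a.s.
\end{itemize}
Uniformity in $t$ needs more, for example an $L^1$ small-ball bound for fBm combined with a union bound over a fine grid. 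Your pathwise transfer from $V$ to $e^V$ is fine.

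\textbf{Step~3 does not match Step~1.} The model's noise $e^{\varepsilon_i}$ is additive on $\log S$, not on $\sigma$ or $V$. After you retarget to volatility, Lemma~\ref{lem:noise} has nothing to act on: the relevant error is that of the volatility proxy, and $\sigma_{\mathrm{noise}}^{10}$ is not its threshold. The lemma is also not an a.s.\ statement. It compares expectations and uses $G^{\tilde X}_\Lambda=G^X_\Lambda+G^\varepsilon_\Lambda$, which holds only as a triangle inequality.

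\textbf{What your plan can deliver.} It can give a corrected statement: pathwise consistency of $\Hhat$ for $V$ and $e^V$, once both directions and the uniformity argument are in place. The proxy error and the noise would still each need a separate a.s.\ bound in a joint regime. It cannot deliver the corollary as written.
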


\begin{proof}
Apply Theorem~\ref{thm:noise}. The log-return process satisfies (C1)
and (C2) with $H=0.1$ by standard properties of fBm
\cite{nourdin2012}. The threshold $\Lambda^*=\sigma_{\mathrm{noise}}^{10}$
is practically zero for any realistic noise level (Remark~\ref{rem:threshold}).
\qed
\end{proof}

\subsection{Time-varying roughness detection}

\begin{corollary}[Non-stationarity test]\label{cor:nonstat}
If $H(t)$ is genuinely time-varying with
$\inf_{|t-t'|\ge\tau}|H(t)-H(t')|\ge\delta>0$,
then $\sup_t|\Hhat(t)-H(t)|\to0$ distinguishes this from constant $H$
once $\Lambda<\delta^{1/(1-H_{\max})}$.
\end{corollary}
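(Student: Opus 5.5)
The plan is to derive Corollary~\ref{cor:nonstat} from the uniform consistency of Theorem~\ref{thm:consist}, made quantitative using the bias expansion behind Lemma~\ref{lem:mean}. The test statistic will be the oscillation $\sup_{|t-t'|\ge\tau}|\Hhat(t)-\Hhat(t')|$. Under the null of constant $H$, this oscillation should be small. Under the alternative, it should be at least $\delta$ minus twice the estimation error. The triangle inequality gives
\[
|\Hhat(t)-\Hhat(t')|\ge|H(t)-H(t')|-2\sup_u|\Hhat(u)-H(u)|.
\]
So the corollary reduces to showing that $\sup_u|\Hhat(u)-H(u)|<\delta/4$ once $\Lambda<\delta^{1/(1-H_{\max})}$. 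Then the alternative yields an oscillation of at least $\delta/2$, while the null yields one of at most $\delta/2$.

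To make the threshold explicit, I will split the error as in the proof of Theorem~\ref{thm:consist} into a bias term and a fluctuation term. For the bias, I will redo Lemma~\ref{lem:mean} with constants. Using the bounds $c\,s^{H(t)-1}\le\E|\eth_sX(t)|\le C\,s^{H(t)-1}$ from (C3), the bias $\log\E[\GL(t)]/\log\Lambda-(H(t)-1)$ is bounded by
\[
\frac{\log C+|\log(1-H(t))|+\log\bigl(1-\Lambda^{1-H(t)}\bigr)^{-1}}{|\log\Lambda|}.
\]
The last term in the numerator is controlled by $\Lambda^{1-H_{\max}}$. This is where the threshold appears. The requirement $\Lambda^{1-H_{\max}}<\delta$ keeps the correction $(1-\Lambda^{1-H})^{-1}$ within a factor $1+O(\delta)$ of $1$, and the constant terms are handled by the $|\log\Lambda|$ denominator. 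For the fluctuation term, I will use Proposition~\ref{prop:conc} along $\Lambda=e^{-n}$. To pass between consecutive dyadic-exponential levels, I will use the monotonicity of $\GL$ in $\Lambda$ together with $\log$-ratios of order $1/n$. This gives $\sup_t|\log\tilde G_\Lambda(t)|/|\log\Lambda|\to0$ a.s.

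The main obstacle is that the fluctuation term only goes to $0$ almost surely, with a random rather than deterministic rate. A deterministic threshold such as $\delta^{1/(1-H_{\max})}$ therefore cannot, by itself, guarantee the fluctuation bound. My first attempt will be to read the statement as \emph{eventually} distinguishing: for a.e.\ $\omega$ there is a random $\Lambda_0(\omega)$, and the separation holds for $\Lambda<\min(\Lambda_0,\delta^{1/(1-H_{\max})})$, with the deterministic part coming from the bias. If a deterministic statement is wanted instead, I will convert Chebyshev plus the variance bound in (C1) into a probability bound $P(\sup_t|\tilde G_\Lambda-1|>\eta)\le K\,\mathrm{Var}/\eta^2$. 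The supremum over $t$ will be handled via the Kolmogorov/(C2) chaining of Proposition~\ref{prop:conc}. This gives a statement holding with probability tending to one at the stated threshold. Finally, under the null of constant $H$, the same bounds give an oscillation below $\delta/2$, which completes the separation.
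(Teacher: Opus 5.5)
The paper states this corollary without proof, so there is no argument of the paper's to compare with. Judged against the statement itself, your proposal has a genuine gap exactly where the threshold is supposed to come from.

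\textbf{The bias step.} Your reduction needs $\sup_u|\Hhat(u)-H(u)|<\delta/4$ for every $\Lambda<\delta^{1/(1-H_{\max})}$, and you assert that your bias bound delivers this. It does not. The condition $\Lambda^{1-H_{\max}}<\delta$ only controls the term $\log\bigl(1/(1-\Lambda^{1-H(t)})\bigr)\le\log\bigl(1/(1-\delta)\bigr)$, and that term was never the bottleneck. The remaining terms contribute $K/|\log\Lambda|$, where $K=\max(|\log c|,|\log C|)+\sup_t|\log(1-H(t))|$. This $K$ is positive, since $K\ge\log(1/(1-H_{\max}))$. At $\Lambda=\delta^{1/(1-H_{\max})}$ the contribution equals $K(1-H_{\max})/\log(1/\delta)$, which exceeds $\delta/4$ for all small $\delta$ because $\delta\log(1/\delta)\to0$.

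So ``handled by the $|\log\Lambda|$ denominator'' is true only in the limit $\Lambda\to0$. Getting below $\delta/4$ requires $|\log\Lambda|\ge 4K/\delta$, i.e.\ $\Lambda\le e^{-4K/\delta}$, which is exponentially smaller than the stated threshold.

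Nor is this just slack in your upper bound. Suppose (C3) holds at constant $H$ in the form $\E|\eth_sX(t)|=a(t)\,\kappa\,s^{H-1}$, with a bounded non-constant prefactor $a$. Then $\log\E[\GL(t)]-\log\E[\GL(t')]=\log(a(t)/a(t'))$ exactly. Hence the deterministic part of the null oscillation $\Hhat(t)-\Hhat(t')$ is $\log(a(t)/a(t'))/\log\Lambda$, which is of order $1/\log(1/\delta)\gg\delta$ just below the stated threshold. Bounding bias and fluctuation separately, as you do, therefore cannot yield separation at $\delta^{1/(1-H_{\max})}$. That threshold does not follow from (C1)--(C3).

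\textbf{The fallback.} It fails for the same scaling reason. The fluctuation term is $\log\tilde G_\Lambda/\log\Lambda$, so you need $|\log\tilde G_\Lambda|\lesssim\delta|\log\Lambda|$. At the threshold this is a deviation level $\eta\asymp\delta\log(1/\delta)$. Chebyshev then needs $\mathrm{Var}(\tilde G_\Lambda)=o(\delta^2\log^2(1/\delta))$, a decay polynomial in $\Lambda$. But (C1) gives only summability of the variances along $\Lambda=e^{-n}$. Also, at a fixed threshold there is no parameter in which a probability can ``tend to one''.

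\textbf{What you do prove.} Your argument establishes the eventual statement, via Theorem~\ref{thm:consist}, your interpolation between the levels $e^{-n}$, and the triangle inequality. Almost surely there is $\Lambda_0(\omega,\delta)>0$ such that for all $\Lambda<\Lambda_0$ the statistic $\sup_{|t-t'|\ge\tau}|\Hhat(t)-\Hhat(t')|$ exceeds $\delta/2$ under the alternative and stays below $\delta/2$ under the null. In that version the bias is absorbed into $\Lambda_0$, or can be made explicit as $\Lambda\le e^{-4K/\delta}$, and $\delta^{1/(1-H_{\max})}$ plays no role. You should state and prove the corollary in that form rather than attribute the stated threshold to the bias.
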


This gives the first test for time-variation in roughness that does
not require pre-averaging or a parametric model.

\subsection{Open problems}

\begin{enumerate}
\item \emph{Optimal $\Lambda$.} For given sample size $n$ and noise
  level $\sigma$, the MSE-optimal $\Lambda$ is unknown.
\item \emph{Joint estimation of $H(t)$ and $\sigma$.} The threshold
  $\Lambda^*$ depends on $H$, which is unknown. An iterative estimator
  alternating between $\hat H$ and $\hat\sigma$ is needed.
\item \emph{Multifractal volatility.} If $H(t)$ is itself a random
  process, the spectrum $D(h)=\dim_H\{t:H(t)=h\}$ is the natural
  object. The HGDS 0-1 law \cite{hgds01law} gives conditions under
  which $D(h,\omega)$ is a.s.\ deterministic.
\end{enumerate}

\end{document}